\documentclass[reqno]{amsart}

\usepackage[margin=1in]{geometry}

\usepackage{mathtools}
\usepackage{amssymb, amsfonts}
\usepackage{mathrsfs}
\usepackage{microtype}
\usepackage{enumitem}

\numberwithin{equation}{section}

\theoremstyle{plain}
\newtheorem{theorem}{Theorem}[section]
\newtheorem{lemma}[theorem]{Lemma}
\newtheorem{proposition}[theorem]{Proposition}

\theoremstyle{definition}

\theoremstyle{remark}

\usepackage[hidelinks]{hyperref}

\title[Prime plus a non-square-free integer]
{Prime plus a non-square-free integer: an elementary approach}

\author{Peter J. Campbell}

\address{School of Mathematics and Physics, The University of Queensland, St Lucia, Brisbane, QLD 4072, Australia}
\email{p.campbell1@uq.edu.au}

\subjclass[2020]{Primary 11P32; Secondary 11N13}

\keywords{Goldbach-type problems, non-square-free integers, least primes in arithmetic progressions}

\begin{document}

\begin{abstract}
    Lee and O'Clarey recently conjectured that every integer \(n>24\) can be written as the sum of a prime and a positive integer that is not square-free. They proved this for odd \(n\), and for all \(n>24\) assuming the generalised Riemann hypothesis for Dirichlet \(L\)-functions. We prove their conjecture unconditionally for every integer \(n>24\) that is not divisible by \(997\#\), where \(997\#=\prod_{p\leq 997}p\). In particular, any counterexample must be divisible by every prime up to \(997\), and hence exceeds \(10^{415}\). The proof uses an elementary congruence argument together with finite computation.
\end{abstract}

\maketitle

\section{Introduction}\label{sec:intro}

Goldbach's conjecture asserts that every even integer greater than two can be written as the sum of two primes. A natural relaxation is to retain one prime summand while allowing the other summand to belong to a larger arithmetically defined set. One such problem concerns square-free integers. Recall that a positive integer is \emph{square-free} if it is not divisible by the square of any prime.

Dudek~\cite{dudek2017_sum_prime_square_free} proved that every integer greater than two can be written as the sum of a prime and a square-free integer. Lee and O'Clarey~\cite{lee_oclarey2026_non_square_free} recently considered the complementary problem, requiring the second summand to be non-square-free. They proved that every sufficiently large integer has such a representation~\cite[Theorem~1.1]{lee_oclarey2026_non_square_free}, although their argument is ineffective owing to its use of the Siegel–Walfisz theorem. They also established the following explicit results.

\begin{theorem}[Lee--O'Clarey {\cite[Theorems~1.2--1.3]{lee_oclarey2026_non_square_free}}]\label{thm:lee_oclarey}
    Every odd integer \(n>24\) can be written as the sum of a prime and a positive integer that is not square-free. Assuming the generalised Riemann hypothesis for Dirichlet \(L\)-functions, the same conclusion holds for every integer \(n>24\).
\end{theorem}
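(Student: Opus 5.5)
For odd \(n\) the argument is elementary. Use the modulus \(4\): if \(n\equiv 1\pmod 4\), take \(p=5\); if \(n\equiv 3\pmod 4\), take \(p=3\). In either case \(n-p\equiv 0\pmod 4\), so \(4\mid n-p\), and \(n-p\ge 4\) as soon as \(n\ge 9\). Hence \(n-p\) is a positive integer that is not square-free, and every odd \(n>24\) (indeed every odd \(n\ge 9\)) has the required representation.

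For even \(n\) the modulus \(4\) is useless. Either \(p\) is odd and \(n-p\) is odd, or \(p=2\), which only works when \(n-2\) happens not to be square-free. So I would look for an odd prime \(q\) and a prime \(p\) satisfying
\begin{equation*}
p\equiv n \pmod{q^2}, \qquad p<n.
\end{equation*}
Then \(q^2\mid n-p\), and \(n-p>0\) because \(p\) is odd while \(n\) is even, so \(p\neq n\).

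To make this progression admissible I need \(\gcd(n,q)=1\). I would therefore take \(q\) to be the least odd prime not dividing \(n\). Since \(n\) is divisible by every odd prime below \(q\), the primorial bound \(\prod_{3\le \ell<q}\ell\le n\) gives \(q\ll \log n\), with an explicit constant from Chebyshev-type bounds on \(\theta(x)\).

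The key step is then a least-prime bound in the residue class \(n \bmod q^2\). Under GRH for Dirichlet \(L\)-functions I would invoke an explicit conditional bound for the least prime in an arithmetic progression. For instance, Lamzouri--Li--Soundararajan give a bound of the form \(p\le (\varphi(k)\log k)^2\) for modulus \(k\). With \(k=q^2\ll(\log n)^2\) this yields
\begin{equation*}
p\ll (\log n)^4(\log\log n)^2,
\end{equation*}
which is less than \(n\) for all \(n\) beyond an explicit threshold \(N_0\).

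The main obstacle is making \(N_0\) small enough to be checked. This needs a fully explicit bound on \(q\) in terms of \(n\) combined with the explicit GRH constant. The remaining even \(n\) with \(24<n\le N_0\) would be handled by direct computation. For each such \(n\), one searches over small odd primes \(q\nmid n\) for a prime \(p<n\) with \(p\equiv n\pmod{q^2}\), or uses \(p=2\) when \(n-2\) is not square-free. If \(N_0\) is too large for a naive search, I would refine the argument by trying several primes \(q\) rather than only the least one. I would also precompute, for each small modulus \(q^2\) and each residue class coprime to \(q\), the least prime in that class; this table covers every even \(n\) whose smallest admissible \(q\) is small.
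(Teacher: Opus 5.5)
Your proposal is correct. The odd case is exactly the \(q=2\) case of Lemma~\ref{lem:initial_q}.

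For even \(n\), the paper does not reprove the conditional statement. It only records that Lee and O'Clarey use this congruence argument with the Lamzouri--Li--Soundararajan bound in an intermediate range of their proof. Your version instead takes \(q\) to be the least prime not dividing \(n\). That is precisely Proposition~\ref{prop:least_prime_criterion}, with the GRH bound replacing the computed \(M(q)\) for large \(q\).

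This choice dissolves what you call the main obstacle. You do not need to pass through \(q\ll\log n\) and an explicit \(N_0\); compare with the primorial directly. Since \(n\) is even, \(q^-\#\mid n\) (you may include the factor \(2\)). The bound you quote gives \(M(q)\le(2q(q-1)\log q)^2\), which is already below \(q^-\#\) for every prime \(q\ge29\). At \(q=29\) it is about \(3\times10^{7}\) against \(29^-\#=223\,092\,870\), and the gap only widens as \(q\) grows; it narrowly fails at \(q=23\).

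For \(q\le23\), Lemma~\ref{lem:M_values} gives \(M(q)\le14081\), and moreover \(M(q)<q^-\#\) for \(q=17,19,23\). So only \(n\le3617\) remain, which is Lemma~\ref{lem:finite_verification}. Equivalently, the conditional theorem is just Theorem~\ref{thm:main} plus the GRH least-prime bound for \(q>997\).

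Compared with the structure the paper attributes to Lee and O'Clarey, your route gives a self-contained conditional proof. It needs only a tiny computation and no separate large-\(n\) argument.

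One small slip: \(n-p>0\) follows from \(p<n\), not from \(p\) being odd. Indeed \(p=2\) can occur when \(n\equiv2\pmod{q^2}\).
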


Lee and O'Clarey conjectured that their conditional result holds unconditionally. We establish their conjecture for every \(n > 24\) outside a single sparse arithmetic progression: the multiples of \(997\#\). Here and throughout, we write
\begin{equation*}
    x\#=\prod_{p\leq x}p
\end{equation*}
for the primorial of \(x\).

\begin{theorem}\label{thm:main}
    Every integer \(n>24\) satisfying \(997\# \nmid n\) can be written as 
    \begin{equation*}
        n=p+r,
    \end{equation*}
    where \(p\) is prime and \(r\) is a positive integer that is not square-free.
\end{theorem}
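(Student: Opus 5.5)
Since \(997\#\nmid n\), there is a prime \(q\le 997\) with \(q\nmid n\). The plan is to fix the least such \(q\) and look for a prime \(p<n\) with \(p\equiv n \pmod{q^2}\). Then \(r=n-p\) is a positive multiple of \(q^2\), so it is not square-free. The congruence class \(n \bmod q^2\) is coprime to \(q^2\) because \(q\nmid n\), so Dirichlet's theorem supplies infinitely many primes in it. The only issue is to find one below \(n\).

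The approach is to replace analytic information with a finite, explicit table. For each prime \(q\le 997\) and each residue \(a\) with \(1\le a<q^2\) and \(q\nmid a\), let \(P(q,a)\) be the least prime \(\equiv a \pmod{q^2}\). Set \(M(q)=\max_a P(q,a)\). These numbers are finite and computable by direct search, since there are only about \(\sum_{q\le 997}q^2\approx 5\cdot 10^7\) classes and each least prime is small in practice (heuristically \(O(q^2\log^2 q)\)). If \(n>M(q)\), the prime \(p=P(q,n\bmod q^2)\) satisfies \(p<n\) and we are done. So the problem reduces to integers \(n\le M(q)\) where \(q\) is the least prime not dividing \(n\).

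The key observation handling the remaining range is that \(q\) being the least non-divisor forces \((q^-)\#\mid n\), where \(q^-\) is the prime preceding \(q\). Hence \(n\ge (q^-)\#\) (for \(q>2\)). Primorials grow like \(e^{(1+o(1))q}\), while \(M(q)\) is only polynomial in \(q\). So for all but small \(q\) we automatically have \(n\ge (q^-)\#>M(q)\), and the argument above applies. For \(q=2\) (\(n\) odd), Theorem~\ref{thm:lee_oclarey} already settles the claim, though one can also argue directly. For the handful of small \(q\), say \(q\le 11\) or so, where \((q^-)\#\le M(q)\) may fail, the leftover \(n\) are bounded by \(M(q)\). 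These are checked directly by computer: for each such \(n>24\) we search for a prime \(p<n\) with \(n-p\) divisible by some \(\ell^2\), using any prime \(\ell\), not just \(q\).

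The main obstacle is the computational step: certifying \(M(q)\) for all \(q\le 997\) and checking \((q^-)\#>M(q)\). The first check is routine but large (sieving primes up to roughly \(10^{8}\)–\(10^{9}\) and recording first hits in each class mod \(q^2\)). For the inequality one can even avoid exact \(M(q)\) for large \(q\) by using a crude certified bound, such as an explicit Linnik-type bound (e.g.\ Bach–Sorenson-style explicit results are GRH-conditional, so instead just compute). The limit \(997\) is presumably exactly where the authors' computation stopped. Beyond it the method would need the unknown least-prime bounds for larger moduli. A secondary subtlety is making sure the small-\(q\) exceptional range is fully covered, including the genuine exceptions \(n\le 24\).
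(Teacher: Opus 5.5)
Your proposal is correct and follows essentially the same route as the paper: take \(q\) to be the least prime not dividing \(n\), compute \(M(q)\) for every prime \(q\le 997\), use \(q^-\#\mid n\) to get \(M(q)<q^-\#\le n\) for all but small \(q\), and settle the small-\(q\) cases by a finite direct search. One correction to your hedged ``\(q\le 11\) or so'': the paper's computation shows the failing cases are exactly \(q=5,7,11,13\) (for example \(M(13)=3617>2310\)), so the direct check must cover \(24<n\le 3617\), while \(q=2,3\) need no check since \(M(2)=5\) and \(M(3)=19\) are below \(24\).
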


Our starting point is an elementary congruence argument used by Lee and O'Clarey in the proof of their conditional result. Let \(q\) be a prime not dividing \(n\), so that \(n\) lies in a reduced residue class modulo \(q^2\). If there exists a prime \(p < n\) such that \(p\equiv n\pmod{q^2}\), then \(q^2\mid n-p\), so \(n-p\) is a positive integer that is not square-free. In the intermediate range of their proof, Lee and O'Clarey obtain such a prime using a conditional bound of Lamzouri, Li and Soundararajan~\cite{lamzouri_li_soundararajan2015_quadratic_nonresidue} for the least prime in an arithmetic progression.

We replace this conditional input with computation, and take \(q\) to be the least prime not dividing \(n\). This choice forces every prime smaller than \(q\) to divide \(n\), and hence gives a primorial lower bound for \(n\). The problem is then reduced to comparing this lower bound with the least prime in the required residue class modulo \(q^2\). We carry out the necessary computations for every prime \(q\leq997\). The value \(997\) reflects the range of this computation rather than a structural limitation of the argument.

The remainder of the paper is organised as follows. In Section~\ref{sec:reduction}, we develop the least-missing-prime criterion and prove Theorem~\ref{thm:main}. We conclude in Section~\ref{sec:future_work} with some remarks on possible extensions of the argument.

\section{A Least-Missing-Prime Criterion}\label{sec:reduction}

We begin by introducing the notation required for the least-missing-prime argument. For a given positive integer \(n\), let
\begin{equation*}
    q=q(n)
    :=
    \min\{p \text{ prime} : p\nmid n\}.
\end{equation*}
Thus every prime smaller than \(q\) divides \(n\), while \((n,q^2)=1\).

The condition \(q=2\) simply means that \(n\) is odd, while \(q=3\) means that \(n\) is even and \(3\nmid n\). We formulate the following lemma in terms of \(q\) to emphasise the pattern that will later be extended to larger primes.

\begin{lemma}\label{lem:initial_q}
    Every integer \(n>5\) satisfying \(q=2\) can be written as
    \begin{equation*}
        n=p+r,
    \end{equation*}
    where \(p\) is prime and \(r\) is a positive integer that is not square-free. The same conclusion holds for every integer \(n>8\) satisfying \(q=3\).
\end{lemma}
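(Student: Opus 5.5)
For \(q=2\) (that is, \(n\) odd), I will use the congruence argument modulo \(q^2=4\). Since \(n\) is odd, \(n\equiv 1\) or \(3\pmod 4\). If \(n\equiv 1\pmod 4\), take \(p=5\). Then \(r=n-5\) is positive for \(n>5\) and divisible by \(4\), so it is not square-free. If \(n\equiv 3\pmod 4\), take \(p=3\). Then \(r=n-3\equiv 0\pmod 4\), and \(r\) is positive for \(n>3\). Hence every odd \(n>5\) is handled. The threshold \(5\) is exactly what the choice \(p=5\) in the class \(1\bmod 4\) requires, since \(n=5\) would give \(r=0\).

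For \(q=3\) (that is, \(n\) even with \(3\nmid n\)), I will work modulo \(q^2=9\). The reduced residue classes modulo \(9\) that \(n\) can occupy are \(1,2,4,5,7,8\), and I pick the least prime in each class:
\begin{equation*}
    1\mapsto 19,\quad 2\mapsto 2,\quad 4\mapsto 13,\quad 5\mapsto 5,\quad 7\mapsto 7,\quad 8\mapsto 17.
\end{equation*}
When \(n\equiv a\pmod 9\) and \(p_a\) is the corresponding prime, \(r=n-p_a\) is divisible by \(9\). It is positive as soon as \(n>p_a\). The largest of these primes is \(19\), so the crude version of the argument only covers \(n>19\). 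Each remaining even \(n\) with \(9<n\le 19\) and \(3\nmid n\) then has to be checked directly.

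A cleaner route for those small cases is to use \(4\) in place of \(9\) where it helps. For even \(n\), the difference \(n-2\) is divisible by \(4\) exactly when \(n\equiv 2\pmod 4\), which gives \(r=n-2\). For \(n\equiv 0\pmod 4\), the mod-\(9\) table applies, or one checks \(n\) directly. The finitely many values in \((8,19]\) are \(10,14,16\):
\begin{equation*}
    10=2+8,\qquad 14=2+12,\qquad 16=7+9.
\end{equation*}
In each case the second summand (\(8\), \(12\) or \(9\)) is not square-free. Every larger \(n\) in this case is covered by the mod-\(9\) table, which completes the case \(q=3\).

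The only real obstacle is the finite verification at the low end, namely confirming the least primes in each residue class and checking the few small \(n\) directly. The thresholds \(5\) and \(8\) are sharp: for \(q=2\), \(n=5\) has no representation because its only candidates are \(5-2=3\), \(5-3=2\) and \(5-5=0\). For \(q=3\), the value \(n=8\) fails because \(8-2=6\), \(8-3=5\), \(8-5=3\) and \(8-7=1\) are all square-free.
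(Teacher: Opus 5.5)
Your proof is correct and matches the paper's argument: it uses the same least-prime tables modulo \(4\) and modulo \(9\), and checks the leftover cases \(10, 14, 16\) directly. The only difference is that you use \(14=2+12\) where the paper uses \(14=5+9\), and both are valid.
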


\begin{proof}
    In either case, it suffices to find a prime \(p < n\) satisfying \(p\equiv n\pmod{q^2}\), since then \(n-p\) is a positive multiple of \(q^2\) and is therefore not square-free.

    Suppose first that \(q=2\) and \(n > 5\). Then \(2\nmid n\), so \(n\) lies in one of the two reduced residue classes modulo \(4\). For each possible residue class, the following table gives the least prime \(p\) satisfying \(p\equiv n\pmod{4}\).
    \begin{equation*}
        \begin{array}{c|c}
            n\bmod 4 & p\\
            \hline
            1 & 5\\
            3 & 3
        \end{array}
    \end{equation*}
    In either case, \(p\leq 5 < n\) and \(4 \mid n-p\). Therefore, \(r=n-p\) is positive and not square-free, giving \(n=p+r\).
    
    Now suppose \(q=3\) and \(n > 19\). By the definition of \(q\), we have \(2\mid n\) and \(3\nmid n\). Hence \(n\) lies in one of the six reduced residue classes modulo \(9\). As above, the following table gives the least prime \(p\) congruent to \(n\) in each possible residue class.
    \begin{equation*}
        \begin{array}{c|c}
            n\bmod 9 & p\\
            \hline
            1 & 19\\
            2 & 2\\
            4 & 13\\
            5 & 5\\
            7 & 7\\
            8 & 17
        \end{array}
    \end{equation*}
    Each listed prime satisfies \(p\leq19<n\), so the table proves the result when \(n>19\). The remaining integers \(n>8\) satisfying \(q=3\) are \(10\), \(14\), and \(16\), and these are covered by
    \begin{equation*}
        10=2+8,\qquad
        14=5+9,\qquad
        16=7+9.
    \end{equation*}
    This completes the proof.
\end{proof}

The lower bounds in Lemma~\ref{lem:initial_q} are sharp, since neither \(5\) nor \(8\) can be written as the sum of a prime and a positive integer that is not square-free.

The case \(q=2\) recovers the result of Lee and O'Clarey~\cite[Theorem~1.2]{lee_oclarey2026_non_square_free} for odd integers \(n>24\). More generally, Lemma~\ref{lem:initial_q} shows that every integer \(n>24\) satisfying \(6\nmid n\) has the required representation.

A further elementary refinement is worth recording. If \(n\equiv2\pmod4\), then taking \(p=2\) gives
\begin{equation*}
    4\mid n-p,
\end{equation*}
so \(n-p\) is positive and not square-free. More generally, if
\begin{equation*}
    n\equiv p \pmod{p^2}
\end{equation*}
for some prime \(p < n\), then \(p^2\mid n-p\). For instance, after Lemma~\ref{lem:initial_q} it remains only to consider multiples of \(6\). Among these, taking \(p=2\) eliminates \(6,18,30\pmod{36}\), while taking \(p=3\) eliminates \(12\) and \(30\pmod{36}\). Thus any counterexample to the conjecture of Lee and O'Clarey with \(n>24\) must lie in one of the residue classes
\begin{equation*}
    0,\ 24\pmod{36}.
\end{equation*}

We do not pursue such refinements systematically. Our aim instead is to develop the least-missing-prime argument for larger values of \(q\), where analytic estimates for least primes in arithmetic progressions become the natural tool for further progress. We record the observation nevertheless to emphasise that the divisibility conditions obtained below do not characterise the possible counterexamples; elementary congruence considerations already rule out many integers satisfying them.

To express the corresponding divisibility condition for larger values of \(q\), we write
\begin{equation*}
    x^-\#
    :=
    \prod_{p<x}p
\end{equation*}
for the product of the primes strictly smaller than \(x\). Since \(q\) is the least prime not dividing \(n\), every prime smaller than \(q\) divides \(n\), and therefore
\begin{equation*}
    q^-\#\mid n.
\end{equation*}
In particular, when \(q=5\), this recovers the divisibility condition \(6\mid n\) obtained above.

The tables in the proof of Lemma~\ref{lem:initial_q} motivate the following definition. For a prime \(q\), let
\begin{equation*}
    M(q)
    :=
    \max_{\substack{a\bmod q^2\\(a,q)=1}} \bigl\{\text{least prime }p\text{ satisfying }p\equiv a\pmod{q^2}\bigr\}.
\end{equation*}
Thus \(M(q)\) is the largest of the least-prime representatives among the reduced residue classes modulo \(q^2\). In particular,
\begin{equation*}
    M(2)=5
    \qquad\text{and}\qquad
    M(3)=19.
\end{equation*}

For \(q=5\), the corresponding least-prime representatives are as follows:
\begin{equation*}
    \begin{array}{c|r}
        n\bmod 25 & p\\
        \hline
        1  & 101\\
        2  & 2\\
        3  & 3\\
        4  & 29\\
        6  & 31\\
        7  & 7\\
        8  & 83\\
        9  & 59\\
        11 & 11\\
        12 & 37
    \end{array}
    \qquad\qquad
    \begin{array}{c|r}
        n\bmod 25 & p\\
        \hline
        13 & 13\\
        14 & 89\\
        16 & 41\\
        17 & 17\\
        18 & 43\\
        19 & 19\\
        21 & 71\\
        22 & 47\\
        23 & 23\\
        24 & 149
    \end{array}
\end{equation*}
and hence
\begin{equation*}
    M(5)=149.
\end{equation*}

Thus every integer \(n>149\) satisfying \(q=5\) has the required representation. More generally, the individual least-prime representatives in the tables will no longer be important. It is only their maximum \(M(q)\) matters. This gives the basic criterion underlying the rest of the argument.

\begin{proposition}[Least-missing-prime criterion]\label{prop:least_prime_criterion}
    Let \(n\) be a positive integer, and let \(q=q(n)\) be the least prime not dividing \(n\). If \(M(q)<n\), then \(n\) can be written as \(n=p+r\), where \(p\) is prime and \(r\) is a positive integer that is not square-free.
\end{proposition}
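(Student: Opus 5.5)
The plan is to repeat the argument from the proof of Lemma~\ref{lem:initial_q}, with the explicit tables replaced by the single quantity \(M(q)\).

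First, by the definition of \(q=q(n)\) we have \(q\nmid n\). Since \(q\) is prime, this gives \((n,q^2)=1\). Hence the residue class \(a\equiv n\pmod{q^2}\) is one of the reduced residue classes modulo \(q^2\) over which the maximum defining \(M(q)\) is taken.

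Next, I note that every reduced residue class modulo \(q^2\) contains a prime. This is Dirichlet's theorem, and it is what makes \(M(q)\) well defined in the first place; the statement implicitly presupposes this. So let \(p\) be the least prime with \(p\equiv n\pmod{q^2}\). By the definition of \(M(q)\) as a maximum of these least primes over all reduced classes, we get \(p\leq M(q)\). The hypothesis \(M(q)<n\) then gives \(p<n\).

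Finally, set \(r=n-p\). Then \(r\) is a positive integer because \(p<n\), and \(q^2\mid r\) because \(p\equiv n\pmod{q^2}\). Thus \(r\) is divisible by the square of the prime \(q\), so it is not square-free, and \(n=p+r\) is the required representation.

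There is no real obstacle here. The only points needing care are checking that \(n\) lies in a \emph{reduced} class, which is exactly where the choice of \(q\) as a prime not dividing \(n\) is used, and noting that \(M(q)\) is finite. The substantive difficulty, namely bounding or computing \(M(q)\) and comparing it with the lower bound \(q^-\#\mid n\), is deferred to the later application of this criterion.
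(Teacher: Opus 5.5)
Your proof is correct and matches the paper's argument: \(q\nmid n\) puts \(n\) in a reduced class modulo \(q^2\), the definition of \(M(q)\) gives a prime \(p\leq M(q)<n\) in that class, and then \(n-p\) is a positive multiple of \(q^2\). Your remark that Dirichlet's theorem is what makes \(M(q)\) well defined is left implicit in the paper, but it does not change the argument.
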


\begin{proof}
    Since \((n,q)=1\), the residue class \(n\bmod q^2\) is reduced. By the definition of \(M(q)\), there is therefore a prime \(p\leq M(q)<n\) satisfying \(p\equiv n\pmod{q^2}\). Hence \(q^2\mid n-p\), so \(n-p\) is positive and not square-free.
\end{proof}

Since \(q^-\#\mid n\), Proposition~\ref{prop:least_prime_criterion} applies whenever \(M(q)<q^-\#\).

\begin{lemma}\label{lem:M_values}
    For \(q=5,7,11,13,17,19,\) and \(23\), the values of \(M(q)\) and \(q^-\#\) are
    \begin{equation*}
        \begin{array}{c|r|r}
            q & M(q) & q^-\#\\
            \hline
            5  & 149   & 6\\
            7  & 613   & 30\\
            11 & 1847  & 210\\
            13 & 3617  & 2310\\
            17 & 6277  & 30030\\
            19 & 12689 & 510510\\
            23 & 14081 & 9699690
            \end{array}
        \end{equation*}
    Moreover,
    \begin{equation*}
        \max_{\substack{29\leq q\leq997\\q\text{ is prime}}} M(q)
        =
        M(941)
        =
        216\,369\,871.
   \end{equation*}
\end{lemma}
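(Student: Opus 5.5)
This lemma is a finite computation, and I would prove it by describing an algorithm that certifies each value of \(M(q)\) exactly. The primorials \(q^-\#\) in the table are routine products and need only be multiplied out. The substance is computing \(M(q)\) for every prime \(5\le q\le 997\), and confirming that the maximum over \(29\le q\le 997\) is attained at \(q=941\) with the stated value.

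For a fixed prime \(q\), the computation runs as follows.
\begin{enumerate}[label=(\roman*)]
\item Sieve the primes up to some bound \(X\).
\item Initialise an array indexed by the reduced residues \(a\bmod q^2\); there are \(\varphi(q^2)=q(q-1)\) of them.
\item Scan the primes in increasing order. For each prime \(p\), compute \(a=p\bmod q^2\). If \(q\nmid a\) and slot \(a\) is still empty, record \(p\) there and decrement a counter of empty slots.
\item Stop as soon as the counter reaches zero. The last prime recorded is exactly \(M(q)\), because every reduced class has by then received its least prime, and the last class filled is the one whose least prime is largest.
\end{enumerate}
Since the scan is in increasing order, the output is an exact value and not merely a bound, so correctness needs no analytic input.

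To choose \(X\), I would use heuristics: the least prime in a progression modulo \(Q=q^2\) is typically of size about \(\varphi(Q)\log Q\), with the maximum over classes somewhat larger, of order \(\varphi(Q)(\log Q)^2\). For \(q\le 997\) this is around \(10^8\) to \(10^9\), consistent with the stated value \(216\,369\,871\). I would therefore take \(X=10^9\) and run a single segmented sieve of Eratosthenes up to \(X\). During that one pass, I would update all 164 moduli \(q^2\) for primes \(29\le q\le 997\) simultaneously, or else process each \(q\) separately with early termination. If some class were still unfilled at \(X\), I would simply enlarge \(X\). This does not happen, since the claimed maximum is below \(X\).

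The small cases \(q\le 23\) come out of the same routine with tiny bounds, and they can be checked against the tables already given for \(q=2,3,5\). For the final claim, I would take the maximum of the \(164\) computed values and record that it equals \(M(941)\). As a reproducibility check, I would also exhibit the residue class modulo \(941^2\) attaining it, and verify primality of \(216\,369\,871\) directly.

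The main obstacle is purely computational: making the enumeration of primes up to about \(2.2\times10^8\) efficient and trustworthy across all \(q\le 997\). The arithmetic itself is trivial. I would address this with a segmented sieve and an independent cross-check, for example recomputing a few \(M(q)\) with a different primality routine or implementation.
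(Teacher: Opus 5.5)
Your proposal is correct and is essentially the paper's proof: the paper invokes a script that exhaustively finds the least prime in each reduced residue class modulo \(q^2\) and then takes the maximum over \(q\), and your increasing-order scan with early termination is exactly such a self-certifying search, described in more detail. One harmless slip is that there are \(159\), not \(164\), primes \(q\) with \(29\le q\le 997\).
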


\begin{proof}
    For each prime \(q\) in the stated range, the script \texttt{compute\_M\_values.py} in~\cite{PrimePlusNonsquarefreeRepo} exhaustively determines the least prime in each reduced residue class modulo \(q^2\), and hence computes \(M(q)\). The stated maximum is then obtained by comparing these values over all primes \(29\leq q\leq997\).
\end{proof}

Combining Lemma~\ref{lem:M_values} with \(M(2)=5\) and \(M(3)=19\), we see that
\begin{equation*}
    \max_{\substack{q\leq13\\q\text{ is prime}}}M(q)=3617.
\end{equation*}
The table also shows that \(M(q)\geq q^-\#\) for \(q=5,7,11,\) and \(13\), while \(M(q)<q^-\#\) for \(q=17,19,\) and \(23\).

For every prime \(q\) satisfying \(29\leq q\leq997\), Lemma~\ref{lem:M_values} gives
\begin{equation*}
    M(q)
    \leq
    216\,369\,871
    <
    223\,092\,870
    =
    29^-\#
    \leq
    q^-\#.
\end{equation*}
Thus \(M(q)<q^-\#\) for every prime \(q\) satisfying \(17\leq q\leq997\).

The cases \(q=2\) and \(q=3\) have already been handled by Lemma~\ref{lem:initial_q}. It therefore remains only to bridge the cases \(q=5,7,11,\) and \(13\) up to the largest corresponding value \(M(13)=3617\), which we do simultaneously by finite verification.

\begin{lemma}\label{lem:finite_verification}
    Every integer \(n\) satisfying
    \begin{equation*}
        24<n\leq3617
    \end{equation*}
    can be written as
    \begin{equation*}
        n=p+r,
    \end{equation*}
    where \(p\) is prime and \(r\) is a positive integer that is not square-free.
\end{lemma}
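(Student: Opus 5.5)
This is a finite statement, so the plan is a direct computation, organised so that most of the work is done by the congruence reductions already proved. By Lemma~\ref{lem:initial_q}, every \(n>24\) with \(q(n)\in\{2,3\}\) already has the required representation. So only integers \(n\) in \(24<n\leq 3617\) with \(6\mid n\) need to be checked. In this range \(q(n)\in\{5,7,11,13\}\), since \(2\cdot3\cdot5\cdot7\cdot11\cdot13=30030>3617\). That leaves roughly \(600\) integers.

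The reduction can go further with Proposition~\ref{prop:least_prime_criterion}. When \(q(n)=5\) and \(n>149=M(5)\), the criterion applies immediately. This removes every multiple of \(6\) in \((149,3617]\) that is not divisible by \(5\). Similarly, when \(q(n)=7\) (so \(30\mid n\) and \(7\nmid n\)) and \(n>613=M(7)\), the criterion also applies. What remains is a short list:
\begin{itemize}[leftmargin=*]
\item the multiples of \(6\) up to \(149\);
\item the multiples of \(30\) up to \(613\);
\item the multiples of \(210\) up to \(1847\) with \(11\nmid n\);
\item the multiples of \(2310\) up to \(3617\), which is just \(n=2310\).
\end{itemize}

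For each remaining \(n\), I will exhibit an explicit witness. I will search over the primes \(p<n\) in increasing order and stop at the first one for which \(n-p\) is divisible by \(4\), \(9\), \(25\), \(49\), or some other square of a prime. The refinement noted after Lemma~\ref{lem:initial_q} already disposes of many cases, since \(p=2\) works when \(n\equiv 2\pmod 4\) and \(p=3\) works when \(n\equiv 3\pmod 9\). A witness is also easy to find for most other \(n\): for example, \(n-p\equiv 0\pmod 4\) needs only a prime \(p\equiv n\pmod 4\) below \(n\) that happens to work. In practice I would simply run a script alongside \texttt{compute\_M\_values.py} in~\cite{PrimePlusNonsquarefreeRepo}. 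The script would loop over all \(24<n\leq3617\), search for the least prime \(p<n\) with \(n-p\) not square-free, and record \((n,p)\). The reductions above only make the check verifiable by hand or printable as a table.

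The main obstacle is ensuring there is no genuine exception near the lower end. The statement implicitly claims \(24\) is the largest counterexample, so small multiples of \(6\) such as \(30, 36, 42, 48, 60, 72, 90, 120\) must each be verified explicitly. For instance, \(30=3+27\), \(36=11+25\), \(48=23+25\), and \(120=11+109\) fails, so a different choice is needed there, such as \(120=47+73\)? No: one should instead take \(120=7+113\) or, more simply, \(120=11+\ldots\); the search handles this automatically. Beyond those small cases, the verification is routine and finite.
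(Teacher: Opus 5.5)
Your argument is sound, and it organises the finite check differently from the paper. The paper uses Lemma~\ref{lem:initial_q} only to reduce to the \(598\) multiples of \(6\) in \((24,3617]\) and then searches all of them by computer; your fallback script over all \(24<n\leq3617\) would do the same thing. You go further and apply Proposition~\ref{prop:least_prime_criterion} with \(M(5)=149\), \(M(7)=613\), \(M(11)=1847\) and \(M(13)=3617\) from Lemma~\ref{lem:M_values}. That lemma is proved before this one and independently of it, so there is no circularity, and the values are needed for Theorem~\ref{thm:main} anyway.

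What your route buys is a residual set of only \(43\) integers, so the lemma becomes checkable by hand. However, you never write the table, and on your route that table is the whole proof. It does close:
\begin{itemize}
\item the \(22\) residual \(n\equiv2\pmod4\) are handled by \(p=2\);
\item \(48,84,120,300,480,840\), all \(\equiv12\pmod{36}\), are handled by \(p=3\);
\item the remaining fifteen are handled by
\end{itemize}
\begin{gather*}
36=11+5^2,\quad 60=11+7^2,\quad 72=47+5^2,\quad 96=71+5^2,\quad 108=83+5^2,\\
132=7+5^3,\quad 144=19+5^3,\quad 180=5+5^2\cdot7,\quad 240=71+13^2,\quad 360=17+7^3,\\
420=59+19^2,\quad 540=11+23^2,\quad 600=61+7^2\cdot11,\quad 1260=419+29^2,\quad 1680=107+11^2\cdot13.
\end{gather*}

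Two passages in your proposal are wrong and should be corrected.
\begin{itemize}
\item Every residual \(n\) is divisible by \(6\). So \(4\mid n-p\) forces \(p=2\), and \(9\mid n-p\) forces \(p=3\). There is no other prime \(p\equiv n\pmod 4\) that ``happens to work''. This is exactly why the fifteen cases above need a square \(\ell^2\) with \(\ell\geq5\).
\item All of your proposed witnesses for \(120\) are invalid. Since \(73\), \(109\) and \(113\) are prime, \(47+73\), \(11+109\) and \(7+113\) all fail. The correct witness is \(120=3+3^2\cdot13\), which your own \(p=3\) observation already gives because \(120\equiv3\pmod 9\).
\end{itemize}
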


\begin{proof}
    By Lemma~\ref{lem:initial_q}, it remains only to consider integers \(n\) satisfying
    \begin{equation*}
        24 < n\leq 3617
        \qquad\text{and}\qquad
        6\mid n.
    \end{equation*}
    There are \(598\) such integers. For each of them, we searched over the primes \(p<n\) and found a prime for which \(n-p\) is divisible by the square of a prime. The resulting representations are produced by \texttt{finite\_verification.py} in~\cite{PrimePlusNonsquarefreeRepo}.
\end{proof}

\begin{proof}[Proof of Theorem~\ref{thm:main}]
    Let \(n>24\) satisfy \(997\#\nmid n\). If \(n\leq3617\), then the result follows from Lemma~\ref{lem:finite_verification}. We may therefore suppose that \(n>3617\).

    Let \(q=q(n)\). Since \(997\#\nmid n\), we have \(q\leq997\).

    If \(q=2\) or \(q=3\), then the result follows from Lemma~\ref{lem:initial_q}.

    Suppose now that \(5\leq q\leq13\). By Lemma~\ref{lem:M_values},
    \begin{equation*}
        M(q)\leq3617<n,
    \end{equation*}
    so Proposition~\ref{prop:least_prime_criterion} gives the required representation.

    Finally, suppose that \(17\leq q\leq997\). As shown above,
    \begin{equation*}
        M(q)<q^-\#.
    \end{equation*}
    Since every prime smaller than \(q\) divides \(n\), we have \(q^-\#\mid n\), and hence
    \begin{equation*}
        M(q)<q^-\#\leq n.
    \end{equation*}
    Proposition~\ref{prop:least_prime_criterion} again gives the required representation.
\end{proof}

\section{Future Work}\label{sec:future_work}
The computational cutoff at \(q=997\) is not a structural limitation of the method. Rather, the least-missing-prime framework naturally reduces the remaining problem to obtaining sufficiently strong explicit bounds for the least prime in a reduced residue class modulo \(q^2\). For large \(q\), this is an analytic problem rather than a computational one.

A quantitative form of Linnik's theorem due to Heath-Brown~\cite[Theorem~6]{heath_brown1992_least_prime} gives
\begin{equation*}
    P(a,m)\ll m^{5.5},
\end{equation*}
uniformly for \((a,m)=1\), with an absolute effectively computable implied constant. Taking \(m=q^2\) gives
\begin{equation*}
    M(q)\ll q^{11}.
\end{equation*}
On the other hand, by the prime number theorem in the form
\(\vartheta(x)\sim x\),
\begin{equation*}
    \log q^-\#
    =
    \sum_{p<q}\log p
    =
    (1+o(1))q,
\end{equation*}
and hence
\begin{equation*}
    q^-\#
    =
    \exp((1+o(1))q),
\end{equation*}
which eventually dominates \(q^{11}\). Hence
\begin{equation*}
    M(q)<q^-\#\leq n
\end{equation*}
whenever \(q=q(n)\) is sufficiently large. The finitely many smaller values of \(q\) contribute only finitely many fixed values of \(M(q)\). Hence Proposition~\ref{prop:least_prime_criterion} implies that the desired representation holds for all sufficiently large \(n\). The resulting threshold need not coincide with the sufficiently large threshold arising from the method of Lee and O'Clarey~\cite[Theorem~1.1]{lee_oclarey2026_non_square_free}.

A natural next step is to make this argument fully explicit. Sufficiently strong explicit bounds would reduce the remaining problem to a finite computation and thereby provide a route to proving the conjecture unconditionally for every \(n>24\). One approach would be to obtain sufficiently sharp explicit constants in a Linnik-type theorem. For the present application, such a result is needed only for moduli of the form \(q^2\), with \(q\) prime, rather than for arbitrary moduli. Alternatively, one could use explicit results of Kadiri~\cite[Theorem~1.1]{kadiri2008_short_intervals} for primes in arithmetic progressions to non-exceptional moduli, together with an appropriate treatment of the exceptional case. We do not pursue these analytic refinements here.

\section{Acknowledgments}
The author thanks Adrian Dudek for suggesting the problem, for helpful discussions concerning future directions, and for comments on an earlier version of the manuscript.

\bibliographystyle{abbrv}
\bibliography{refs}

@article{dudek2017_sum_prime_square_free,
    author  = {Dudek, Adrian W.},
    title   = {{On the Sum of a Prime and a Square-Free Number}},
    journal = {Ramanujan J.},
    year    = {2017},
    volume  = {42},
    number  = {1},
    pages   = {233--240},
    doi     = {10.1007/s11139-015-9736-2},
}

@article{lee_oclarey2026_non_square_free,
    author  = {Lee, E. S. and O'Clarey, R.},
    title   = {{On the Sum of a Prime and a Number That Is Not Square-Free}},
    journal = {Arch. Math.},
    year    = {2026},
    volume  = {127},
    pages   = {153--163},
    doi     = {10.1007/s00013-026-02275-6},
}

@misc{PrimePlusNonsquarefreeRepo,
  author       = {Campbell, Peter J.},
  title        = {{Computational verification for the sum of a prime and a non-square-free number}},
  year         = {2026},
  howpublished = {GitHub, version 1.0.0,
                  \url{https://github.com/PeterJCampbell1/prime-plus-nonsquarefree/releases/tag/v1.0.0}},
}

@article{lamzouri_li_soundararajan2015_quadratic_nonresidue,
    author  = {Lamzouri, Youness and Li, Xiannan and Soundararajan, Kannan},
    title   = {{Conditional Bounds for the Least Quadratic Non-Residue and Related Problems}},
    journal = {Math. Comp.},
    year    = {2015},
    volume  = {84},
    number  = {295},
    pages   = {2391--2412},
}

@article{heath_brown1992_least_prime,
    author  = {Heath-Brown, D. R.},
    title   = {{Zero-Free Regions for Dirichlet \(L\)-Functions, and the Least Prime in an Arithmetic Progression}},
    journal = {Proc. London Math. Soc. (3)},
    year    = {1992},
    volume  = {64},
    number  = {2},
    pages   = {265--338},
    doi     = {10.1112/plms/s3-64.2.265},
}

@article{kadiri2008_short_intervals,
    author  = {Kadiri, Habiba},
    title   = {{Short Effective Intervals Containing Primes in Arithmetic Progressions and the Seven Cubes Problem}},
    journal = {Math. Comp.},
    year    = {2008},
    volume  = {77},
    number  = {263},
    pages   = {1733--1748},
    doi     = {10.1090/S0025-5718-08-02084-X},
}

\end{document}